\pdfoutput=1

\documentclass[11pt]{amsart}

\usepackage[T1]{fontenc}
\usepackage{lmodern}
\usepackage{amsmath,amssymb,amsfonts,amsthm}
\usepackage{graphicx}
\usepackage[hidelinks]{hyperref}

\newcommand{\duv}{\frac{\partial^2}{\partial u\partial v}}

\theoremstyle{plain}
\newtheorem{theorem}{Theorem}[section]
\newtheorem{proposition}[theorem]{Proposition}
\newtheorem{lemma}[theorem]{Lemma}

\theoremstyle{remark}
\newtheorem{remark}[theorem]{Remark}

\title[A universal obstruction to the Samuelson condition]{A Universal Obstruction to the Samuelson Condition for Tangent Lagrangian 2-Webs}

\author{Yasuhiro Kurokawa}
\address{Department of Architecture, School of Architecture, Shibaura Institute of Technology, 3-7-5 Toyosu, Koto-ku, Tokyo 135-8548, Japan}
\email{kurokawa@sic.shibaura-it.ac.jp}
\subjclass[2020]{53A60, 53D05}
\keywords{web geometry, Samuelson condition, Lagrangian \(2\)-web, tangent lines, envelope}
\date{}

\begin{document}

\begin{abstract}
We study the Samuelson area condition for Lagrangian \(2\)-webs in the
symplectic plane generated by tangent lines to plane curves. We prove that,
near every point of nonzero curvature of a \(C^\infty\) regular plane curve,
the local tangent \(2\)-web formed by nearby distinct tangent lines is not
a Samuelson web. The obstruction comes from a universal local phenomenon:
the Jacobian of the intersection map of two tangent lines has a simple zero
along the diagonal where the two lines coalesce, producing a logarithmic
singularity and hence a nonzero mixed derivative. This gives a direct local
obstruction to the Jacobian characterization of the Samuelson condition.
We also prove an analogous non-Samuelson result for separated real analytic
tangent families whenever the associated intersection map defines a local
\(2\)-web. These results show that the obstruction previously found by
explicit computations for non-degenerate real conics is not tied to the
special algebraic form of conics, but reflects a general local mechanism
for tangent families near their envelopes.
\end{abstract}

\maketitle

\section{Introduction}
Let \(M\subset\mathbb R^2\) be an open subset of the symplectic plane
with the canonical symplectic form
\(
\omega=dx\wedge dy.
\)
A pair \(\mathcal W=(\mathcal F_1,\mathcal F_2)\) of Lagrangian
foliations of codimension \(1\) on \(M\), whose leaves are transverse to
each other, is called a \textit{Lagrangian \(2\)-web} on \(M\). In this paper we
regard such a web as an ordered pair of foliations. Two Lagrangian
\(2\)-webs on open subsets of the symplectic plane are said to be
\textit{locally equivalent} if there exists a local symplectic diffeomorphism mapping each
foliation to the corresponding foliation of the other web.

Let \(\mathcal W=(\mathcal F_1,\mathcal F_2)\) be a Lagrangian \(2\)-web on
\(M\). Consider the four areas \(A,B,C,D\) of quadrilaterals formed by
the leaves of \(\mathcal W\), as shown in Figure~\ref{fig:samuelson}.
We say that \(\mathcal W\) satisfies the \textit{Samuelson condition} if
\[
\frac{A}{B}=\frac{C}{D}
\]
holds for all sufficiently small quadrilaterals formed by the leaves of
the two foliations and contained in \(M\). We call such a web a
\textit{Samuelson web}. 

\begin{figure}[t]
\centering
\includegraphics[width=0.68\textwidth]{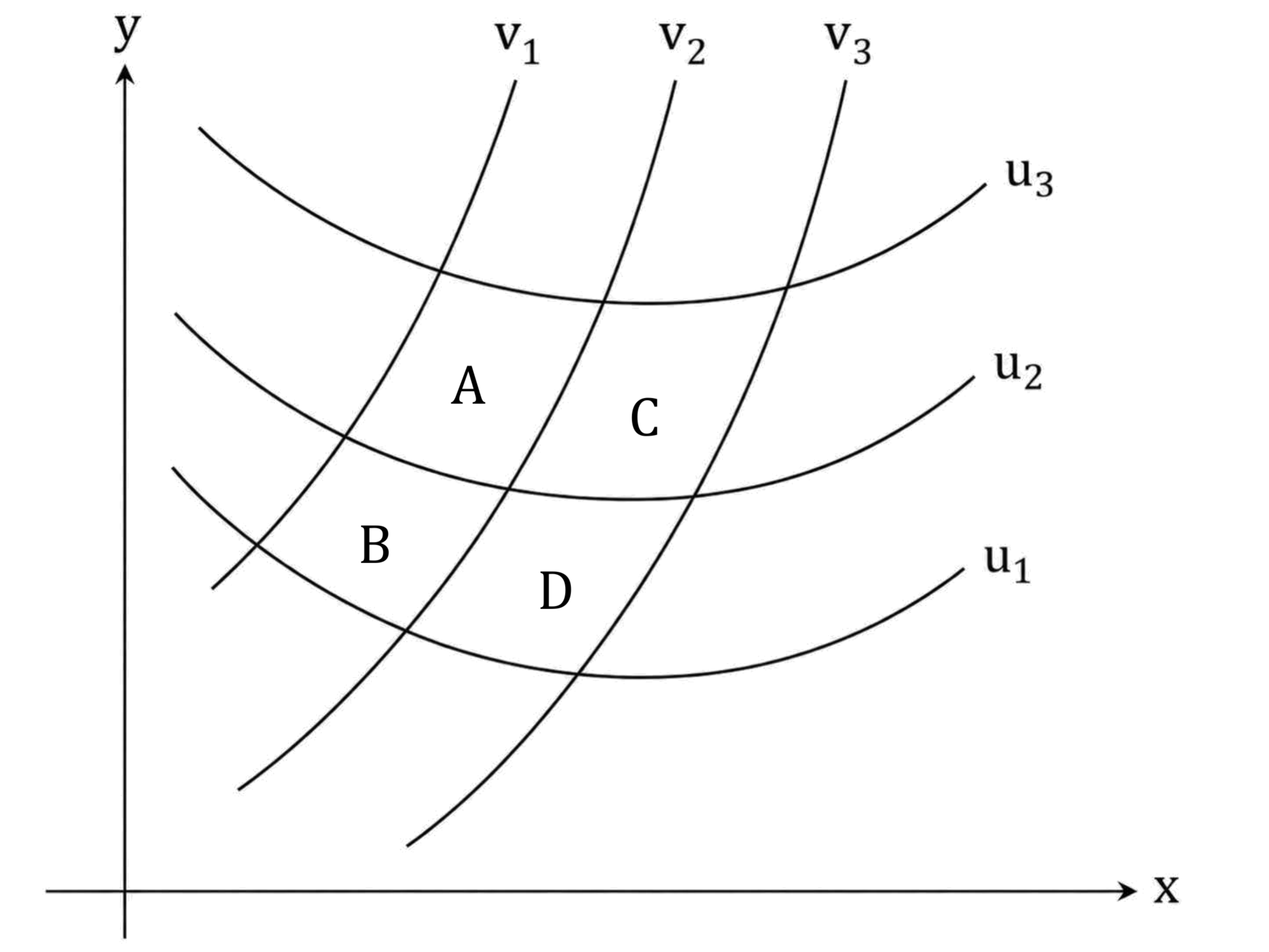}
\caption{Samuelson condition.}
\label{fig:samuelson}
\end{figure}

The standard Samuelson web, equivalently the planar trivial Lagrangian
\(2\)-web, is the web defined by \(x=\mathrm{const}\) and \(y=\mathrm{const}\),
or its restriction to an open subset of \(\mathbb R^2\).

The Samuelson condition originates from an area condition associated
with extremal properties of equilibrium systems.
Cooper, Russell, and Samuelson formulated this condition for two transverse families of curves
in the plane and related it to classical thermodynamics and economic
comparative statics \cite{CooperRussellSamuelson}; see also
\cite{CooperRussell2006,CooperRussell2009}.
Ferrara and Udriste further discussed analogous area conditions in thermodynamic and economic
systems \cite{FerraraUdriste}. 
Russell \cite{Russell2011} pointed out that Samuelson's area ratio
condition remained somewhat mysterious and had received little subsequent
attention, and related it to the Lagrangian-submanifold viewpoint in
symplectic geometry.

There is also a connection-theoretic viewpoint. Tabachnikov associated
to a Lagrangian \(2\)-web a natural symplectic torsion-free connection,
whose flatness characterizes local equivalence to the trivial Lagrangian
\(2\)-web. He also observed that, in the plane case, this connection
measures the failure of the Samuelson area relation \cite{Tabachnikov}.
In bi-Lagrangian terminology, the same connection is the Hess connection
\cite{Hess}.

We also mention the work of Goldberg and Lychagin on Samuelson's
webs in the setting of planar \(4\)-webs. Their work
focuses on ranks and differential invariants
\cite{GoldbergLychaginSamuelsonWebs}.

Tabachnikov considered Lagrangian \(2\)-webs generated by tangent
spaces to two Lagrangian submanifolds and asked when such webs are
equivalent to the trivial Lagrangian \(2\)-web \cite[p.~273]{Tabachnikov}. 
As an analogous problem for planar \(3\)-webs, he recalled the classical theorem 
of Graf and Sauer,
one of the basic results in web geometry:
a \(3\)-web consisting of tangent lines to curves is hexagonal if and only
if the curves are arcs of a curve of class three
\cite{Blaschke,Chern,PereiraPirio}. 
These considerations lead us to ask which tangent families of plane curves 
yield Samuelson Lagrangian 
\(2\)-webs.

In \cite{KurokawaConics}, the author studied the Lagrangian \(2\)-webs
formed by the tangent lines to non-degenerate real conics and proved, by
explicit computations, that these webs do not satisfy the Samuelson
condition. This left open whether the failure was tied to the special
algebraic nature of conics, or whether tangent families of more general
plane curves could behave differently. The purpose of the present paper is
to show that the obstruction is local and is not tied to the algebraic
nature of the curve. Rather, it is caused by a universal diagonal asymptotic
of the intersection map of nearby tangent lines.

The obstruction can be seen from the local behavior of the intersection
map. When two tangent lines coalesce along the envelope, the Jacobian of
their intersection map has a simple zero along the diagonal. Hence the
logarithm of its absolute value has a logarithmic singularity, and the
mixed derivative appearing in Proposition~\ref{prop:samuelson-characterization}
cannot vanish. This gives a direct local obstruction to the Samuelson
condition.

We also treat the case of two separated families of tangent lines. In the
real analytic case, we show that no separated tangent \(2\)-web satisfies
the Samuelson condition whenever the associated intersection map defines
a local \(2\)-web.

All notions used in this paper are local unless otherwise stated. Thus
all Lagrangian \(2\)-webs, equivalences, recalibrations, and Samuelson
conditions are considered on open subsets of
\((\mathbb R^2,dx\wedge dy)\).

The paper is organized as follows. In Section~2, we recall the local
characterization of Samuelson webs in terms of the Jacobian of the web
coordinate map. In Section~3, we introduce tangent coordinates and
compute the Jacobian of the intersection map of two tangent lines.
Section~4 proves the local obstruction theorem near the envelope. In
Section~5, we treat separated tangent families in the real analytic case.

\section{Samuelson condition for Lagrangian 2-webs}

In this section, we review the definition of Samuelson webs and the associated area condition as discussed in \cite{CooperRussellSamuelson,FerraraUdriste}.

Let \(M\subset\mathbb R^2\) be an open set, and let
\[
u=u(x,y),\qquad v=v(x,y)
\]
be two smooth functions on \(M\) such that
\(
du\wedge dv\neq0.
\)
Their level curves define a \(2\)-web \(\mathcal W\) on \(M\).

The map
\[
\phi:M\to\mathbb R^2,\qquad
\phi(x,y)=(u(x,y),v(x,y)),
\]
is a local diffeomorphism and sends \(\mathcal W\) to the
standard \(2\)-web given by \(u=\mathrm{const}\) and \(v=\mathrm{const}\).
We call \(\phi\) the
\textit{web coordinate map}, or simply the \textit{web map}, associated with the first
integrals \(u\) and \(v\).
However, $\phi$ is not necessarily area-preserving, and hence not necessarily a symplectic diffeomorphism.

The Jacobian of $\phi$ is given by
$$J_\phi(x,y)=u_x(x,y)v_y(x,y)-u_y(x,y)v_x(x,y).$$
The Jacobian of $\phi^{-1}$ is given by
$$J_{\phi^{-1}}(u,v)=(u_x(x,y)v_y(x,y)-u_y(x,y)v_x(x,y))^{-1}.$$

Replacing \(u\) and \(v\) with \(U(u)\) and \(V(v)\), respectively, the
diffeomorphism
\[
R:(u,v)\mapsto (U(u),V(v))
\]
that preserves the standard web is called a \textit{recalibration}. 
The functions \(U\) and \(V\) are only required to be local diffeomorphisms; 
they need not be increasing. Thus a recalibration may reverse either leaf-space
coordinate, while the two foliations remain ordered.

We recall the following local characterization of Samuelson webs.
It is essentially due to Cooper, Russell, and Samuelson
\cite{CooperRussellSamuelson}; see also Ferrara and Udriste
\cite{FerraraUdriste}, where an explicit formulation is given under the
assumption that the relevant Jacobian is positive.

\begin{proposition}\label{prop:samuelson-characterization}
Let \(\mathcal W\) be a Lagrangian \(2\)-web on an open set \(M\subset\mathbb R^2\),
defined locally by first integrals \(u=u(x,y)\) and \(v=v(x,y)\), with
\(du\wedge dv\neq0\). Let
\(
\phi=(u,v)
\)
be the web coordinate map.
Then, on a sufficiently small connected coordinate neighborhood, the
following are equivalent:
\begin{enumerate}
\item \(\mathcal W\) satisfies the Samuelson condition.
\item The Jacobian \(J_{\phi^{-1}}\) factorizes as
\[
J_{\phi^{-1}}(u,v)=a(u)b(v),
\]
where \(a\) and \(b\) are non-vanishing functions.
\item The function \(\log|J_{\phi^{-1}}|\) satisfies
\[
\duv
\log|J_{\phi^{-1}}(u,v)|=0.
\]
\item There exists a local recalibration
\(
R:(u,v)\mapsto (U(u),V(v))
\)
such that
\[
J_{R\circ\phi}=1.
\]
\end{enumerate}
\end{proposition}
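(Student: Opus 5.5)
The plan is to prove the equivalences in the order (2)\(\Leftrightarrow\)(3)\(\Leftrightarrow\)(4), and then (1)\(\Leftrightarrow\)(2) through an infinitesimal area computation. Throughout we shrink the neighborhood so that its image under \(\phi\) is a rectangle \(I\times K\); this is possible since \(\phi\) is a local diffeomorphism. On the rectangle, \(J:=J_{\phi^{-1}}\) is nonvanishing and so has constant sign.

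\textbf{(2)\(\Leftrightarrow\)(3).} If \(J=a(u)b(v)\), then \(\log|J|=\log|a(u)|+\log|b(v)|\), and its mixed derivative vanishes. Conversely, suppose \(\partial_u\partial_v\log|J|=0\) on the rectangle. Then \(\partial_u\log|J|\) depends only on \(u\), and integrating gives \(\log|J|=f(u)+g(v)\). Setting \(a=\pm e^{f}\) (with the sign of \(J\)) and \(b=e^{g}\) yields the factorization. The convexity of the domain in \((u,v)\) is what makes this integration legitimate.

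\textbf{(2)\(\Leftrightarrow\)(4).} Recall that \(J_{R\circ\phi}=U'(u)V'(v)J_\phi\) and \(J_\phi=1/J\). If \(J=a(u)b(v)\), choose \(U\) with \(U'=a\) and \(V\) with \(V'=b\). Both are local diffeomorphisms because \(a,b\neq0\), and then \(J_{R\circ\phi}=1\). Conversely, \(J_{R\circ\phi}=1\) gives \(J=U'(u)V'(v)\).

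\textbf{(1)\(\Leftrightarrow\)(2).} The area of a region \(Q\) equals \(\iint_{\phi(Q)}|J|\,du\,dv\), and the leaf quadrilaterals correspond to coordinate rectangles. Write the four cells as \([u_0,u_1]\times[v_0,v_1]\) for \(A\), \([u_1,u_2]\times[v_0,v_1]\) for \(B\), and the corresponding cells over \([v_1,v_2]\) for \(C\) and \(D\), following the layout of Figure~\ref{fig:samuelson}. If \(|J|=|a||b|\), each area factors as \(\alpha_i\beta_j\), where \(\alpha_i=\int|a|\) and \(\beta_j=\int|b|\) over the relevant intervals. Then \(A/B=\alpha_1/\alpha_2=C/D\).

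Conversely, assume \(A D=B C\) for all small grids. Define \(F(u,v)=\int_{u_0}^{u}\int_{v_0}^{v}|J|\). The condition says that the rectangle-area function \(\mathcal A([s,t]\times[p,q])\) satisfies the cross-ratio identity for every pair of adjacent splittings. Take \(u\)-widths \(h\) and \(v\)-widths \(k\), and let \(h,k\to0\) with all four cells based at \((u,v)\). Expanding each area to second order as \(hk\,|J|\) plus corrections, the leading terms cancel. The first nontrivial term of \(AD-BC\) is proportional to \(h^3k^3\bigl(|J|\,\partial_u\partial_v|J|-\partial_u|J|\,\partial_v|J|\bigr)\), which equals \(h^3k^3|J|^2\,\partial_u\partial_v\log|J|\). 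Hence (3) holds, and therefore (2) holds.

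The main obstacle is the bookkeeping in this Taylor expansion: the lower-order terms must be checked to cancel exactly. A cleaner alternative, which I would try first, avoids the expansion. Fix \(u_0<u_1\) and \(v_0<v_1\) and vary only \(u_2\) and \(v_2\). The identity \(A\,D(u_2,v_2)=B(u_2)\,C(v_2)\) shows that \(D\), viewed as a function of \((u_2,v_2)\), is a product of a function of \(u_2\) and a function of \(v_2\). Differentiating in \(u_2\) and \(v_2\) gives \(|J|(u_2,v_2)=\beta(u_2)\gamma(v_2)\) directly. This requires only that the grids may be taken arbitrarily within the small rectangle, which is how "all sufficiently small quadrilaterals" is read here.
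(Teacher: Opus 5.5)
Your proof is correct. The paper does not prove Proposition~\ref{prop:samuelson-characterization} itself. It attributes the result to Cooper--Russell--Samuelson and Ferrara--Udriste, the latter under a positive-Jacobian assumption. Its only addition is the remark that $J_{\phi^{-1}}$ has constant sign on a connected neighborhood, so $\log|J_{\phi^{-1}}|$ may be used. Your opening observation and your choice $a=\pm e^{f}$ play exactly that role, so your proposal supplies a self-contained proof where the paper relies on the literature.

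I have two small remarks on (1)$\Rightarrow$(2).

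\textbf{Your second route.} Fixing $u_0<u_1$ and $v_0<v_1$ gives the factorization of $|J|$ only on the corner $\{u>u_1,\ v>v_1\}$ of the rectangle. To conclude on the whole rectangle:
\begin{enumerate}
\item note that the corner factorization gives $\partial_u\partial_v\log|J|=0$ there;
\item let $(u_1,v_1)$ tend to the lower-left corner;
\item then invoke your (3)$\Rightarrow$(2) on all of $I\times K$.
\end{enumerate}
This route also uses grids of fixed size. That is legitimate only because you first shrink the neighborhood so that every grid in it counts as ``sufficiently small,'' as you acknowledge.

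\textbf{Your first route.} The infinitesimal argument needs only arbitrarily small grids, so it does not depend on how that phrase is read. The bookkeeping you worry about disappears if you work with $\log(AD/BC)$ rather than $AD-BC$. Write $m(s,t)$ for the area of the cell $[s,s+h]\times[t,t+k]$. Then $\log(AD/BC)$ is the mixed second difference of $\log m$ with steps $h,k$. Since $\log m=\log(hk)+\log\int_0^1\!\int_0^1|J|(s+h\sigma,t+k\tau)\,d\sigma\,d\tau$, the $\log(hk)$ term cancels. By the mean value theorem, this difference equals $hk\bigl(\partial_u\partial_v\log|J|(u,v)+o(1)\bigr)$. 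It vanishes for all small $h,k$ only if $\partial_u\partial_v\log|J|(u,v)=0$. No term-by-term cancellation needs to be checked.
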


Since \(J_{\phi^{-1}}\) is non-vanishing, its sign is constant on each
connected coordinate neighborhood. Thus replacing \(\log J_{\phi^{-1}}\)
by \(\log|J_{\phi^{-1}}|\) gives an equivalent local condition and avoids
fixing an orientation of the web coordinates. 
Throughout the paper, we use this sign-independent formulation with
\(\log|J_{\phi^{-1}}|\).

\section{Tangent coordinates and the intersection map}
Let a smooth one-parameter family of tangent lines be written as
\[
L_t:\quad y=tx+h(t),
\]
where \(h\) is a smooth function on an interval \(U\).
At points where \(h''(t)\neq0\), this line family has a regular envelope
\[
\gamma(t)=(-h'(t),h(t)-th'(t)),
\]
and \(L_t\) is tangent to this envelope.

For \(u\ne v\), let
\[
\Psi(u,v)=(x(u,v),y(u,v))
\]
be the intersection point of \(L_u\) and \(L_v\).

\begin{lemma}\label{lem:intersection-map}
For \(u\ne v\), the intersection point is given by
\[
x(u,v)=\frac{h(v)-h(u)}{u-v},\;
y(u,v)=\frac{u h(v)-v h(u)}{u-v}.
\]
Moreover, set
\[
A(u,v)=h(v)-h(u)-h'(u)(v-u)
\]
and
\[
B(u,v)=h(u)-h(v)+h'(v)(v-u).
\]
Then the Jacobian of \(\Psi\) is 
\[
J_\Psi(u,v)
=
-\frac{A(u,v)B(u,v)}{(v-u)^3}.
\]
Finally, putting \(d=v-u\), one has
\[
J_\Psi(u,u+d)=dQ(u,d),
\]
where \(Q\) is smooth and
$
Q(u,0)=-\frac14 h''(u)^2.
$
\end{lemma}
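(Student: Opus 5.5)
The plan is to derive the formulas for \(x,y\) by equating the two line equations. For the Jacobian, I would avoid differentiating the quotients directly and instead exploit the identities that define \(\Psi\). Finally, the diagonal factorization follows from Taylor's formula with integral remainder.

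\emph{Step 1 (intersection point).} Solving \(ux+h(u)=vx+h(v)\) for \(u\neq v\) gives \(x=(h(v)-h(u))/(u-v)\). Substituting into \(y=ux+h(u)\) gives
\[
y=\frac{uh(v)-uh(u)+uh(u)-vh(u)}{u-v}=\frac{uh(v)-vh(u)}{u-v},
\]
as claimed.

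\emph{Step 2 (Jacobian).} Since \(\Psi(u,v)\in L_u\cap L_v\), the identities \(y-ux=h(u)\) and \(y-vx=h(v)\) hold identically in \((u,v)\). Differentiating the first with respect to \(v\) gives \(y_v=ux_v\). Differentiating the second with respect to \(u\) gives \(y_u=vx_u\). Hence
\[
J_\Psi=x_uy_v-x_vy_u=(u-v)\,x_ux_v .
\]
Differentiating the first identity with respect to \(u\) and using \(y_u=vx_u\) gives \((v-u)x_u=x+h'(u)\). Similarly, the second identity differentiated in \(v\) gives \((u-v)x_v=x+h'(v)\). Inserting the formula for \(x\), with \(d=v-u\), one checks that
\[
x+h'(u)=-\frac{A(u,v)}{d},\qquad x+h'(v)=\frac{B(u,v)}{d}.
\]
Therefore \(x_u=-A/d^2\) and \(x_v=-B/d^2\). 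Substituting into the expression above gives \(J_\Psi=-d\cdot AB/d^4=-AB/(v-u)^3\). This sign bookkeeping is the only place where care is needed, and it is routine.

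\emph{Step 3 (diagonal behaviour).} Here \(A\) is the second-order Taylor remainder of \(h\) at \(u\), and \(B\) is the second-order Taylor remainder at \(v=u+d\), expanded toward \(u\). By the integral form of the remainder,
\[
A(u,u+d)=d^2a(u,d),\qquad a(u,d)=\int_0^1(1-s)\,h''(u+sd)\,ds,
\]
\[
B(u,u+d)=d^2b(u,d),\qquad b(u,d)=\int_0^1 s\,h''(u+sd)\,ds .
\]
For \(B\), one expands \(h(u)=h(v-d)\) about \(v\) and substitutes \(s\mapsto 1-s\). Both \(a\) and \(b\) are smooth in \((u,d)\), including at \(d=0\), by differentiation under the integral sign. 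At \(d=0\) they satisfy \(a(u,0)=b(u,0)=\tfrac12h''(u)\). Hence
\[
J_\Psi(u,u+d)=-\frac{d^4ab}{d^3}=d\,Q(u,d),\qquad Q=-ab,
\]
with \(Q\) smooth and \(Q(u,0)=-\tfrac14h''(u)^2\).

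The main (mild) obstacle is ensuring that \(Q\) is genuinely smooth across the diagonal rather than merely defined for \(d\neq0\). The integral-remainder representation handles this directly. It avoids dividing smooth functions by powers of \(d\), which would otherwise require a Hadamard-lemma argument.
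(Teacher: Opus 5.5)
Your proof is correct and follows the paper's proof essentially step for step. The paper also solves $ux+h(u)=vx+h(v)$ and obtains $x_u=-A/(v-u)^2$, $x_v=-B/(v-u)^2$, $y_u=-vA/(v-u)^2=vx_u$, $y_v=-uB/(v-u)^2=ux_v$; the only difference is that you reach these by implicitly differentiating $y-ux=h(u)$ and $y-vx=h(v)$, where the paper just says "direct differentiation." The final step then uses exactly your integral-remainder functions $a=\alpha$, $b=\beta$ with $Q=-\alpha\beta$.
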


\begin{proof}
The intersection point of \(L_u\) and \(L_v\) is obtained by solving
\[
ux+h(u)=vx+h(v).
\]
This gives
\[
x(u,v)=\frac{h(v)-h(u)}{u-v}.
\]
Substituting this into \(y=ux+h(u)\), we obtain
\[
y(u,v)=\frac{u h(v)-v h(u)}{u-v}.
\]

A direct differentiation gives
\[
x_u=-\frac{A(u,v)}{(v-u)^2},
\qquad
x_v=-\frac{B(u,v)}{(v-u)^2},
\]
and
\[
y_u=-\frac{vA(u,v)}{(v-u)^2},
\qquad
y_v=-\frac{uB(u,v)}{(v-u)^2}.
\]
Therefore
\[
J_\Psi
=
x_u y_v-x_v y_u
=
-\frac{A(u,v)B(u,v)}{(v-u)^3}.
\]

Now put \(d=v-u\). 
By Taylor's formula with integral remainder, we can write
\[
A(u,u+d)=d^2\alpha(u,d),
\qquad
B(u,u+d)=d^2\beta(u,d),
\]
where
\[
\alpha(u,d)=\int_0^1(1-s)h''(u+sd)\,ds,
\]
and
\[
\beta(u,d)=\int_0^1s h''(u+sd)\,ds.
\]
In particular,
\[
\alpha(u,0)=\beta(u,0)=\frac12 h''(u).
\]
Hence
\(
J_\Psi(u,u+d)
=
-d\alpha(u,d)\beta(u,d).
\)

Thus
\[J_\Psi(u,u+d)=dQ(u,d),
\] 
where
\(
Q(u,d)=-\alpha(u,d)\beta(u,d)
\)
is smooth and
\(
Q(u,0)=-\frac14h''(u)^2.
\)
\end{proof}

\section{A universal obstruction near the envelope}

\begin{theorem}
Let \(\gamma:I\to\mathbb R^2\) be a \(C^\infty\) regular plane curve,
and let \(p=\gamma(r_0)\) be a point at which the curvature of \(\gamma\)
is nonzero. 
Let \(\Psi\) denote the corresponding intersection map of two tangent
lines. Then the local tangent Lagrangian \(2\)-web associated with pairs
of nearby but distinct tangent lines to \(\gamma\), defined on the image
under \(\Psi\) of either connected component of a sufficiently small
punctured neighborhood of the diagonal, does not satisfy the Samuelson
condition.
\end{theorem}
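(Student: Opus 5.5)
First, I will reduce to the normal form of Section~3. Since the curvature of \(\gamma\) at \(p\) is nonzero, the tangent direction turns monotonically near \(r_0\). After a rotation of the plane (a symplectic map, so the Samuelson condition is unaffected), I may assume that the tangent at \(p\) is not vertical. Near \(r_0\) the slope \(t\) of the tangent line is then a local diffeomorphism of the parameter. The tangent lines can be written as \(L_t: y=tx+h(t)\) with \(h\) smooth, and \(\gamma\) is recovered as the envelope. Nonzero curvature corresponds to \(h''(t_0)\neq0\): with \(\gamma(t)=(-h'(t),h(t)-th'(t))\) we get \(\gamma'(t)=-h''(t)(1,t)\), so regularity and nonvanishing curvature force \(h''\neq0\). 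Taking the slopes \(u,v\) of the two lines as first integrals, the web on \(\Psi(\Omega^\pm)\) has web coordinate map \(\phi=\Psi^{-1}\). Here \(\Omega^\pm=\{\pm(v-u)>0\}\) intersected with a small neighborhood \(N\) of \((t_0,t_0)\). Consequently \(J_{\phi^{-1}}=J_\Psi\).

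By Lemma~\ref{lem:intersection-map}, \(J_\Psi=-AB/(v-u)^3\). On \(\Omega^\pm\) both \(A\) and \(B\) are nonzero, since \(A=d^2\alpha\), \(B=d^2\beta\) and \(\alpha,\beta\) are near \(\tfrac12 h''(t_0)\neq0\). So \(\Psi\) is a local diffeomorphism there. I also need \(\Psi\) to be injective on \(\Omega^\pm\) (after shrinking), so that the image carries a genuine \(2\)-web. Injectivity holds because a point off a strictly convex arc lies on exactly two of its tangent lines, and the order \(u<v\) distinguishes them. The image is open and the leaves are transverse.

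Next, I will apply Proposition~\ref{prop:samuelson-characterization} (iii) and argue by contradiction. Write \(J_\Psi(u,u+d)=dQ(u,d)\), so on \(\Omega^\pm\)
\[
\log|J_\Psi(u,v)|=\log|v-u|+\log|\alpha(u,v-u)|+\log|\beta(u,v-u)|.
\]
The last two terms are smooth on all of \(N\), because \(\alpha,\beta\) do not vanish there. Moreover \(\duv\log|v-u|=1/(v-u)^2\). Suppose the web were Samuelson on \(\Psi(\Omega^+)\). The equivalence in Proposition~\ref{prop:samuelson-characterization} is stated on a sufficiently small connected neighborhood, and I apply it to small discs in \(\Omega^+\); a locally vanishing mixed derivative vanishes everywhere. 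We would then get
\[
\frac{1}{(v-u)^2}=-\duv\bigl(\log|\alpha|+\log|\beta|\bigr)
\]
throughout \(\Omega^+\). The right-hand side is continuous on \(N\), hence bounded near \((t_0,t_0)\). The left-hand side tends to \(+\infty\) as \(v\to u^+\). This is a contradiction, and the same argument works on \(\Omega^-\).

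There is one subtlety: ``does not satisfy the Samuelson condition'' on the image should mean failure on every open subset, not merely failure somewhere. The argument above shows only that \(\duv\log|J_\Psi|\) is nonzero on a thinner neighborhood of the diagonal, namely where \(1/d^2\) exceeds the bound. I will choose the shrunken neighborhood to be that set, so the mixed derivative is nonzero at every point. The Samuelson condition then fails on every open subset. The main obstacle I expect is this bookkeeping of neighborhoods, that is, shrinking so that \(\alpha,\beta\neq0\), \(\Psi\) is injective, and the \(1/d^2\) term dominates, rather than any hard estimate.
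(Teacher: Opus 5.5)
Your proposal is correct and is essentially the paper's proof. You reduce to tangent coordinates with \(h''(t_0)\neq0\), use \(J_\Psi(u,u+d)=dQ(u,d)\) from Lemma~\ref{lem:intersection-map} to split off \(\log|v-u|\), and contradict condition (3) through the unbounded term \(1/(v-u)^2\). Two of your additions are welcome refinements of points the paper leaves implicit: the injectivity of \(\Psi\) on each component, better stated as ``a point lies on at most two of the lines'' since \(t\mapsto h(t)+tx-y\) is strictly convex or concave, and the extra shrinking so that the mixed derivative vanishes nowhere.
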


\begin{proof}
Since the assertion is local and invariant under translations and
orientation-preserving rotations of the symplectic plane, we may assume
that the curve is locally given as a graph
\(
Y=f(X)
\)
near a point \(X=x_0\), with \(f''(x_0)\neq0\). Put
\[
t=f'(X).
\]
Since \(f''(x_0)\neq0\), the inverse function theorem implies that,
after shrinking the neighborhood if necessary, we may write
\[
X=x(t).
\]
The tangent line at \(X=x(t)\) is then
\[
L_t:\quad Y=tX+h(t),
\qquad
h(t)=f(x(t))-t x(t).
\]
Differentiating the identity defining \(h\), and using \(t=f'(x(t))\),
we obtain
\[
h'(t)=-x(t),
\qquad
h''(t)=-x'(t)=-\frac{1}{f''(x(t))}.
\]
Therefore, if \(t_0=f'(x_0)\), then
\(
h''(t_0)\neq0.
\)

Let \(\Psi(u,v)\) be the intersection map of the two tangent lines
\(L_u\) and \(L_v\). By Lemma~\ref{lem:intersection-map}, putting
\(d=v-u\), we have
\[
J_\Psi(u,u+d)=dQ(u,d),
\]
where \(Q\) is smooth and
\(
Q(u,0)=-\frac14h''(u)^2.
\)

Since \(h''(t_0)\neq0\), it follows that
\(
Q(t_0,0)\neq0.
\)
Shrinking the neighborhood if necessary, we may assume that
\[
Q(u,d)\neq0
\]
near \((t_0,0)\).
Since
\(
J_\Psi(u,u+d)=dQ(u,d)
\)
and \(Q(u,d)\neq0\), the map \(\Psi\) is a local diffeomorphism on the
punctured neighborhood \(d\neq0\). 
On the image of the chosen component, the coordinates \(u\) and \(v\) serve as first
integrals of the two tangent foliations. Thus \(\Psi\) is the inverse of
the web coordinate map
\[
\phi:(X,Y)\mapsto (u(X,Y),v(X,Y)).
\]
Consequently,
\(
J_\Psi=J_{\phi^{-1}}.
\)
Moreover,
\[
\log |J_{\phi^{-1}}(u,u+d)|
=
\log |J_\Psi(u,u+d)|
=
\log |d|+\log |Q(u,d)|.
\]
The second term is smooth near \((t_0,0)\), since \(Q\) is non-vanishing
there. Here \(d=v-u\) is used as
a defining function of the diagonal \(u=v\); 
replacing \(d\) by another defining function would change \(\log|d|\)
only by a smooth term.

Since \(d=v-u\), we have
\[
\duv\log |d|
=
\duv\log |v-u|
=
\frac{1}{(v-u)^2}.
\]
Therefore
\[
\duv
\log |J_{\phi^{-1}}(u,v)|
=
\frac{1}{(v-u)^2}+O(1),
\]
where the \(O(1)\)-term comes from the smooth function \(\log|Q(u,d)|\).

Since the term \((v-u)^{-2}\) is unbounded as \(v\to u\), while the
remaining term is bounded, this expression cannot vanish identically on
any punctured neighborhood of the diagonal.

Hence condition (3) in Proposition~\ref{prop:samuelson-characterization}
fails.
Consequently, by Proposition~\ref{prop:samuelson-characterization}, the
tangent Lagrangian \(2\)-web does not satisfy the Samuelson condition.
\end{proof}


\section{Separated tangent families in the real analytic case}
In this section we consider two separated families of tangent lines
represented in tangent coordinates by
\[
L_t:\; y=tx+h(t),
\]
where \(h\) is real analytic on a connected interval \(U\).

\begin{theorem}\label{thm:separated}
Let
\(
L_t: y=tx+h(t)
\)
be a real analytic one-parameter family of tangent lines on a connected
interval \(U\), where \(h''\) is not identically zero.
Let \(I,J\subset U\) be nonempty
open intervals with \(I\cap J=\varnothing\). Suppose that the corresponding
intersection map \(\Psi\) is a local diffeomorphism on \(I\times J\).
Then the Lagrangian \(2\)-web defined on \(\Psi(I\times J)\) by
the two separated tangent families
\[
\{L_u\}_{u\in I},\qquad \{L_v\}_{v\in J}
\]
does not satisfy the Samuelson condition.
\end{theorem}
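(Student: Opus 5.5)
The plan is to propagate the Samuelson identity by analytic continuation from the separated square \(I\times J\) to a neighborhood of the diagonal. There, the mechanism from the previous section forces a nonzero value.

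By Lemma~\ref{lem:intersection-map}, on \(U\times U\) minus the diagonal we have
\[
J_\Psi(u,v)=-(v-u)\,\alpha(u,v)\,\beta(u,v),
\]
where \(\alpha=A/(v-u)^2\) and \(\beta=B/(v-u)^2\). The first step is to observe that these extend across the diagonal as real analytic functions on all of \(U\times U\), via the integral formulas
\[
\alpha=\int_0^1(1-s)\,h''(u+s(v-u))\,ds,\qquad \beta=\int_0^1 s\,h''(u+s(v-u))\,ds .
\]
These are analytic because \(h''\) is analytic and integration in \(s\) preserves analyticity. (One could instead argue directly that a divided difference of an analytic function is analytic.) Hence \(J:=-(v-u)\alpha\beta\) is real analytic on the connected set \(U\times U\).

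Next, I avoid the logarithm, which is not defined where \(J\) vanishes, by introducing the analytic function
\[
P:=J\,J_{uv}-J_uJ_v \quad\text{on } U\times U .
\]
Wherever \(J\neq0\) we have \(\duv\log|J|=P/J^2\). Suppose the web on \(\Psi(I\times J)\) were Samuelson. Since \(\Psi\) is a local diffeomorphism on \(I\times J\), we have \(J_{\phi^{-1}}=J_\Psi\) there, exactly as in the proof of the previous theorem. Condition (3) of Proposition~\ref{prop:samuelson-characterization} then gives \(P=0\) on a small open subset of \(I\times J\). Here I would apply the proposition on a small connected coordinate neighborhood inside \(\Psi(I\times J)\), which suffices since the Samuelson condition is local. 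By the identity theorem for real analytic functions on the connected domain \(U\times U\), it follows that \(P\equiv0\) on \(U\times U\).

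Finally, I derive a contradiction near the diagonal. Since \(h''\not\equiv0\), choose \(t_0\in U\) with \(h''(t_0)\neq0\). Then \(Q=-\alpha\beta\) satisfies \(Q(t_0,t_0)=-\tfrac14 h''(t_0)^2\neq0\), so \(Q\) is nonvanishing near \((t_0,t_0)\). As in the proof of the previous theorem, on the punctured neighborhood where \(d=v-u\neq0\) we get
\[
\duv\log|J|=\frac{1}{(v-u)^2}+O(1).
\]
Multiplying by \(J^2=(v-u)^2Q^2\) gives \(P=Q^2+O((v-u)^2)\). Letting \(v\to u=t_0\) and using continuity of \(P\), we obtain \(P(t_0,t_0)=Q(t_0,t_0)^2\neq0\), contradicting \(P\equiv0\).

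The main point needing care is the first step: \(J\) and \(P\) must be analytic across the diagonal, not merely off it, so that the identity theorem can connect the separated square \(I\times J\) to the diagonal. This is why analyticity is assumed, and why working with \(P\) rather than \(\log|J|\) is essential, since \(\log|J|\) is singular on the diagonal.
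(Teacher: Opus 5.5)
Your proposal is correct and follows essentially the same route as the paper: clear the denominators of \(\duv\log|J_\Psi|\) to obtain a function that is real analytic on all of \(U\times U\), propagate its vanishing from \(I\times J\) by the identity theorem, and reach a contradiction at a diagonal point where \(h''(t_0)\neq0\). The only difference is the choice of that function: the paper uses \(N_h=(v-u)^2\{h''(u)B^3+h''(v)A^3\}-3A^2B^2\) and reads off its leading term \(\tfrac1{16}h''(u)^4(v-u)^8\), whereas your \(P=JJ_{uv}-J_uJ_v\) equals \(N_h/(v-u)^8\), so you evaluate \(P(t_0,t_0)=Q(t_0,t_0)^2=\tfrac1{16}h''(t_0)^4\) directly.
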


We first record the following computation, which will be used in the proof.

\begin{lemma}\label{lem:Sh-formula}
For \(u\ne v\) and \(A(u,v)B(u,v)\ne0\), one has
\[
S_h(u,v)
:=
\duv\log|J_\Psi(u,v)|
=
h''(u)\frac{B(u,v)}{A(u,v)^2}
+
h''(v)\frac{A(u,v)}{B(u,v)^2}
-
\frac{3}{(v-u)^2}.
\]
\end{lemma}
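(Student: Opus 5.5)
Since \(J_\Psi=-AB/(v-u)^3\) by Lemma~\ref{lem:intersection-map}, on the set where \(u\ne v\) and \(AB\ne0\) I will write
\[
\log|J_\Psi|=\log|A|+\log|B|-3\log|v-u|
\]
and differentiate each of the three terms separately with \(\duv\).

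For the last term I will use the computation already made in the proof of the previous theorem, \(\duv\log|v-u|=(v-u)^{-2}\). Hence it contributes \(-3/(v-u)^2\).

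For the term \(\log|A|\), recall \(A(u,v)=h(v)-h(u)-h'(u)(v-u)\). Then
\[
A_v=h'(v)-h'(u),\qquad A_u=-h''(u)(v-u),\qquad A_{uv}=-h''(u).
\]
The mixed derivative is
\[
\duv\log|A|=\frac{A_{uv}}{A}-\frac{A_uA_v}{A^2}
=\frac{-h''(u)A+h''(u)(v-u)\bigl(h'(v)-h'(u)\bigr)}{A^2}.
\]
The numerator equals \(h''(u)\bigl[(v-u)(h'(v)-h'(u))-A\bigr]\). I will check that the bracket is exactly \(B\). Indeed,
\[
(v-u)h'(v)-(v-u)h'(u)-h(v)+h(u)+h'(u)(v-u)=h(u)-h(v)+h'(v)(v-u)=B.
\]
This gives \(h''(u)B/A^2\).

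The term \(\log|B|\) is handled the same way, with \(B(u,v)=h(u)-h(v)+h'(v)(v-u)\). Here
\[
B_u=h'(u)-h'(v),\qquad B_v=h''(v)(v-u),\qquad B_{uv}=h''(v).
\]
Then
\[
\duv\log|B|=\frac{h''(v)B-(h'(u)-h'(v))h''(v)(v-u)}{B^2}.
\]
Its numerator is \(h''(v)\bigl[B+(h'(v)-h'(u))(v-u)\bigr]\), and I will verify that this bracket equals \(A\). This gives \(h''(v)A/B^2\).

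Summing the three contributions yields the stated formula for \(S_h\). The computation is routine, and the only step needing care is the two algebraic identities, which turn the numerators into \(B\) and \(A\) respectively. The hypothesis \(AB\ne0\) ensures that the logarithms are smooth there, so differentiating them is legitimate.
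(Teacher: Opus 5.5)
Your route is the paper's: split $\log|J_\Psi|=\log|A|+\log|B|-3\log|v-u|$ and differentiate each term. The $\log|A|$ and $\log|v-u|$ terms are handled correctly. The $\log|B|$ term, however, contains a sign error, and it makes the identity you promise to verify false.

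From $B_v=h''(v)(v-u)$ one gets $B_{uv}=\partial_u\bigl(h''(v)(v-u)\bigr)=-h''(v)$, not $+h''(v)$. Equivalently, $\partial_v B_u=\partial_v\bigl(h'(u)-h'(v)\bigr)=-h''(v)$. With your sign the bracket is
\[
B+\bigl(h'(v)-h'(u)\bigr)(v-u)=h(u)-h(v)+2h'(v)(v-u)-h'(u)(v-u)=A+2B,
\]
not $A$. Your computation would therefore produce $h''(v)(A+2B)/B^2$. For instance, with $h(t)=t^2/2$ one has $A=B=(v-u)^2/2$. Your expression then gives $6/(v-u)^2$, whereas the true value is $\partial_u\partial_v\log|B|=\partial_u\partial_v\bigl(2\log|v-u|\bigr)=2/(v-u)^2$.

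With the correct sign the numerator is
\[
B_{uv}B-B_uB_v=-h''(v)B+h''(v)(v-u)\bigl(h'(v)-h'(u)\bigr)=h''(v)\bigl[(v-u)\bigl(h'(v)-h'(u)\bigr)-B\bigr].
\]
This bracket is exactly $A$, the mirror image of your identity for the $A$-term. It yields $h''(v)A/B^2$ as required. With this one-line correction your argument is complete and coincides with the paper's proof.
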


\begin{proof}
Since
\(
J_\Psi=-\frac{AB}{(v-u)^3}
\)
from Lemma \ref{lem:intersection-map}, 
we have
\[
\log |J_\Psi|
=
\log|A|+\log|B|-3\log|v-u|.
\]
Using \(A_u=-(v-u)h''(u)\) and \(A_v=h'(v)-h'(u)\), 
a direct computation gives
\[
\duv\log|A|
=
h''(u)\frac{B}{A^2}.
\]
Similarly we get 
\[
\duv\log|B|
=
h''(v)\frac{A}{B^2}.
\]
Also,
\[
\duv\log|v-u|=\frac{1}{(v-u)^2}.
\]
Combining these identities yields the formula.
\end{proof}

Define
\[
N_h(u,v)
=
(v-u)^2\{h''(u)B(u,v)^3+h''(v)A(u,v)^3\}
-
3A(u,v)^2B(u,v)^2.
\]
Although \(S_h\) is defined only where \(A(u,v)B(u,v)\neq0\), the
function \(N_h\) is real analytic on \(U\times U\).
On the set where \(A(u,v)B(u,v)(v-u)\neq0\), the equation \(S_h=0\)
is equivalent to \(N_h=0\).

\begin{proof}[Proof of Theorem~\ref{thm:separated}]
Assume, to the contrary, that the separated tangent \(2\)-web satisfies
the Samuelson condition on \(\Psi(I\times J)\). Pulling back by the local
diffeomorphism \(\Psi\), we obtain
\[
S_h(u,v)=0\qquad (u\in I,\ v\in J).
\]
Thus
\[
N_h(u,v)=0
\qquad (u\in I,\ v\in J).
\]
Since $h$ is real analytic, $N_h$ is a real analytic function on
$U\times U$. Because $I\times J$ is a nonempty open subset of the
connected domain $U\times U$, the identity theorem for real analytic
functions implies
\[
N_h(u,v)\equiv 0
\qquad \text{on } U\times U.
\]

On the other hand, for $v=u+d$, Taylor expansion gives
\[
A(u,u+d)
=
\frac12 h''(u)d^2+O(d^3),
\]
and
\[
B(u,u+d)
=
\frac12 h''(u)d^2+O(d^3).
\]
Hence
\[
N_h(u,u+d)
=
\frac{h''(u)^4}{16}d^8+O(d^9).
\]
Since $h''$ is not identically zero, there exists $u_0\in U$ such that
$h''(u_0)\ne0$. For this $u_0$, the above expansion shows that
$N_h(u_0,u_0+d)$ is not identically zero in $d$, contradicting
$N_h\equiv0$. Therefore the separated tangent \(2\)-web cannot satisfy the
Samuelson condition.
\end{proof}

\begin{remark}
For merely smooth curves, the case of separated tangent families is more
subtle. The proof above uses real analyticity in an essential way: it
extends the vanishing of \(N_h\) from the open set \(I\times J\) to a
neighborhood of the diagonal. Without analyticity, the functional equation
\(S_h=0\) on \(I\times J\) has to be studied directly.
\end{remark}

\section*{Acknowledgements}
The author thanks Professor Hajime Sato for kindly sharing an unpublished
manuscript on the Samuelson condition for Lagrangian \(2\)-webs. The
manuscript was helpful in placing the present work in context.

\bibliographystyle{amsplain}
\bibliography{UOSC_arxiv_final_biblio}

\end{document}